\theoremstyle{plain}
\newtheorem{thm}{Theorem}
\newtheorem{prop}{Proposition}
\begin{document}
\markboth{Vladimir Aleksandrovich Krasnov and Vassily Olegovich Manturov}{Graph-Valued Invariants of Virtual and Classical Links and Minimality Problem}
\title{GRAPH-VALUED INVARIANTS OF VIRTUAL AND CLASSICAL LINKS AND MINIMALITY PROBLEM}
\author{Vladimir Aleksandrovich Krasnov and Vassily Olegovich Manturov}
\address{Peoples' Friendship University of Russia}
%\email{vladimir.krasnov3107@gmail.com, vomanturov@yandex.ru}
%\primaryclass{57M25}
%\secondaryclass{57M27}
%
\maketitle

\begin{abstract}
The Kuperberg bracket is a well known invariant of
classical links. Recently, the second named author and L.H.Kauffman
constructed the graph-valued generalisation of the Kuperberg bracket
for the case of virtual links: unlike the classical case, the invariant
in the virtual case is valued in graphs which carry a significant amount of
information about the virtual knot. The crucial difference between
virtual knot theory and classical knot theory is the rich topology of the
ambient space for virtual knots.

In a paper by M. Chrisman and the second named author,
two-component classical links with one fibred component were
considered; the complement to the fibred component allows one to get
highly non-trivial ambient topology for the other component.

In the present paper, we combine the ideas of the above mentioned
papers and construct the ``virtual'' Kuperberg bracket for
two-component links $L=J\sqcup K$ with one component $(J)$ fibred.
We consider a new geometrical complexity for such links and establish minimality of diagrams in a strong sense. Roughly speaking, every other ``diagram'' of the knot in question contains the initial diagram as a subdiagram. We prove a sufficient condition for minimality in a strong sense where minimality can not be established by means introduced in the paper by M. Chrisman and the second nemed author.
\end{abstract}

\keywords{knot, link, virtual knot, virtual link, graph, invariant, mutation}

\ccode{Mathematics Subject Classification 2000: 57M25, 57M27}
%\tableofcontents

\section{Introduction. Basic Definitions}\label{s1}

Classical knots can be encoded by diagrams and Reidemeister moves.
There are increasing moves (those which increase the number of
crossings), decreasing moves and the third Reidemeister move, which
does not change the number of crossings. Among the most desirable
(and na\"{\i}ve) questions, one can ask the following:

\begin{enumerate}

\item Given a diagram $K$ of a classical knot; assume no decreasing
move can be applied to $K$. Is it true that $K$ is minimal with respect to the crossing number?

\item Given two diagrams. How to detect whether the resulting knots
differ by mutation?

\end{enumerate}

Certainly, the answers to these questions are negative:
there are lots of diagrams of the trivial knot which do not admit
any decreasing move, see, e.g. \cite{MI}, usually, one can not judge
about mutations of knots looking at
their diagrams.

Possibly, the most striking example where one can judge about some
properties of knots by looking at their diagrams is the Tait
Conjecture, later, Kauffman-Murasugi-Thistlethwaite Theorem~\cite{MT} which says that an irreducible alternating diagram is
minimal.

Moreover, coupled with yet another striking result (Tait flyping
conjecture proved by Menasco and Thistlethwaite~\cite{MT}) says that
two irreducible alternating diagrams represent the same link if and
only if they can be obtained from one another by a sequence of
flypes (see Fig. \ref{flype}).

\begin{figure}[ht]
\centerline{\includegraphics[scale=.8]{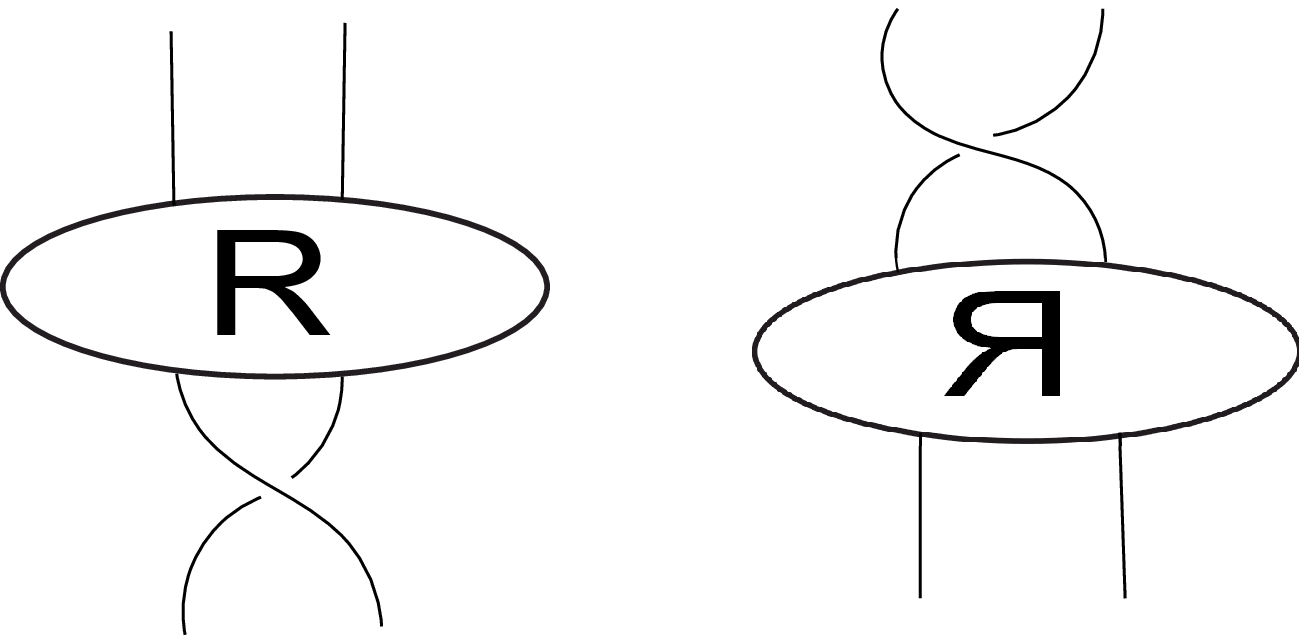}}
\caption{A flype}
\label{flype}
\end{figure}

Note that the proof of the first theorem
(Kauffman-Murasugi-Thisthlethwaite) conjecture is based on one
striking property of one invariant: there is a span of the Kauffman
bracket $sp(K)$ such that $sp(K)\le 4n$ for $n$-crossing
non-split link diagrams, and the equality $sp(K)=4n$ holds for
alternating non-split irreducible link diagrams.

Thus, the Kauffman bracket is useful because one can estimate the
crossing number by using this, but it is hard to say anything about
the shape of the diagram besides of the fact that the diagram is
alternating.

The marvelous result of Menasco and Thislthlethwaite~\cite{MT}
completely solves the classification problem for alternating
classical links but besides some invariants, it also uses various
geometrical arguments.

Nevertheless, the class of alternating links is very small. If we
try to go outside this class then there is still some hope to
estimate the crossing number by using the Kauffman bracket, the
Khovanov homology etc., see \cite{MI}, but as for the shape of
all knot diagrams equivalent to a given one and the complete classification in reasonable
visible geometrical terms, the result seems inapproachable.

Nevertheless, assume we have a knot diagram $K$ schematically shown
in the left part of Fig. \ref{cat}.

\begin{figure}[ht]
\centerline{\includegraphics[scale=.5]{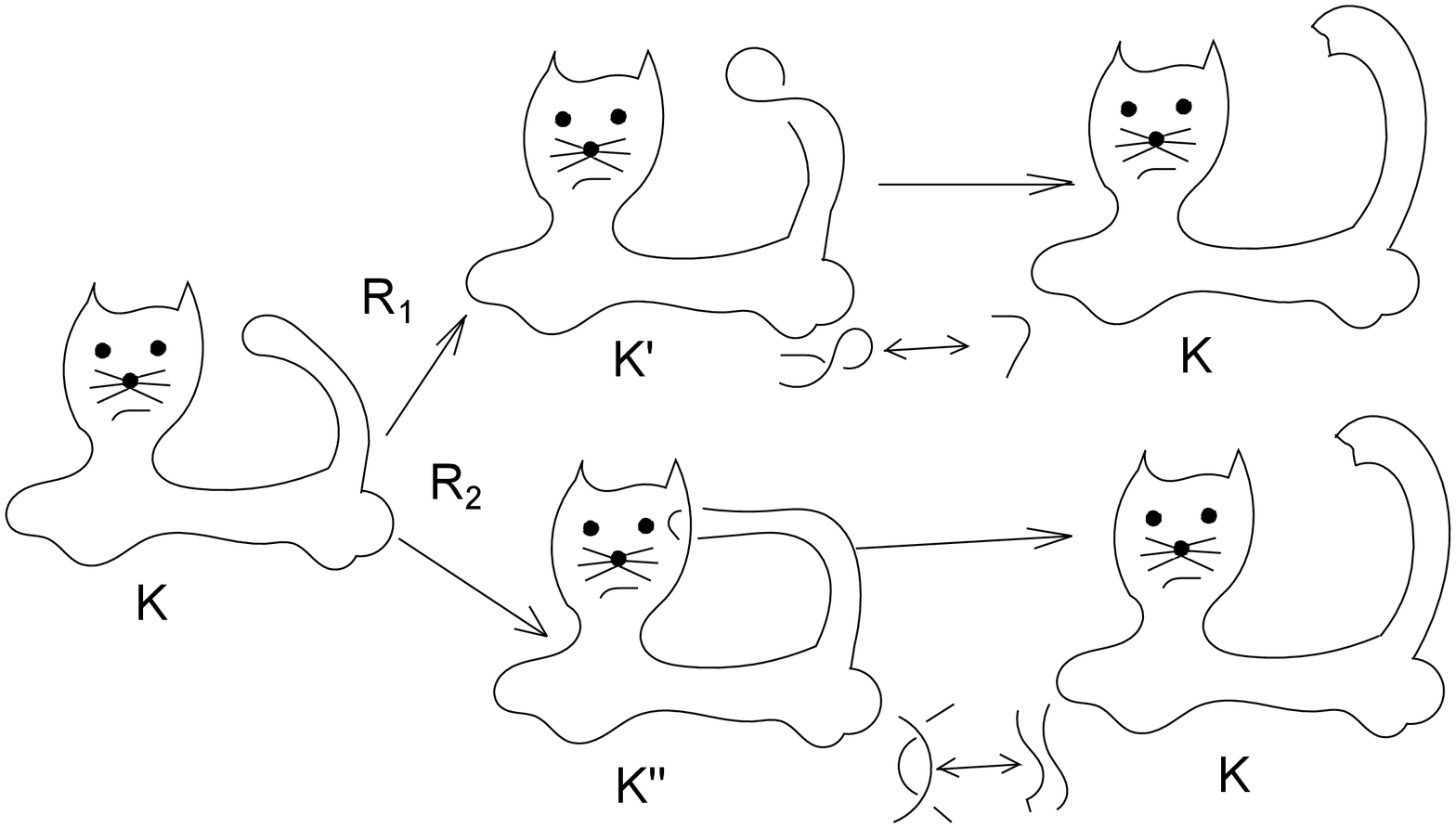}}
\caption{A cat}
\label{cat}
\end{figure}

Assume that it is minimal so that no decreasing move can be applied
to it and assume, furthermore, that no third Reidemeister move can be
applied to it.

Let us start applying Reidemeister moves to $K$. If we apply a first
Reidemeister move to it $K\to K'$, we shall see that $K$ ``sits
inside'' $K'$, i.e., $K$ can be obtained from $K'$ as a result of
''smoothing''. The same can be said about the second increasing move
$K\to K''$.

So, we can see the picture of the initial ``cat'' $K$ inside its
close neighbour diagrams $K'$ and $K''$. Certainly, if we go on
applying Reidemeister moves to $K'$ or $K''$, we can change the diagram in such a way
that no $K$ inside is visible.

But if it were, we could be able to judge about properties of {\em
all} diagrams of the knot represented by $K$ by looking at just one diagram $K$. In some
sense this minimal ``cat'' can be thought of as an invariant of the
knot $K$; it knows much more than just a minimal crossing number: it
has its tail, ears, eyes etc. They all can be seen in any portrait
(diagram) of this knot (cat).

This approach, however, does not work in the classical case: after applying many Reidemeister moves, the initial picture can get lost.

Nevertheless, this program can be partially realised in the case of virutal knots.

The main point is this program can be partially realised in the
virtual domain: there are lots of such ``cats'' which allow one to
judge about properties of {\em any} diagram of a knot by looking at a diagram of this knot (see ~\cite{MI} and~\cite{M}).

M. Chrisman and the second named author started extending this
approach to classical knot theory~\cite{ChM}. In the present paper,
we go on realising the program of extending graphical-valued
invariants to the classical domain.

At first, in the present section, we briefly review the key notions of virtual knot theory.

Knot theory studies embeddings of curves in three-dimensional space. Virtual knot theory
studies embeddings of curves in thickened surface without boundary of arbitrary genus, up to the addition and
removal of empty handles from the surface. Many structures in classical knot theory generalize to the virtual domain.

Virtual knot theory can be thought of a generalisation of classical knot theory from the diagrammatic point of view. We introduce a new
crossing type (a virtual crossing, which should be treated neither as a passage of one branch over the other one nor as a
passage of one branch under the other) and extend new moves to the list of the Reidemeister moves (see Fig. \ref{moves}).

\begin{figure}[ht]
\centerline{\includegraphics[scale=.8]{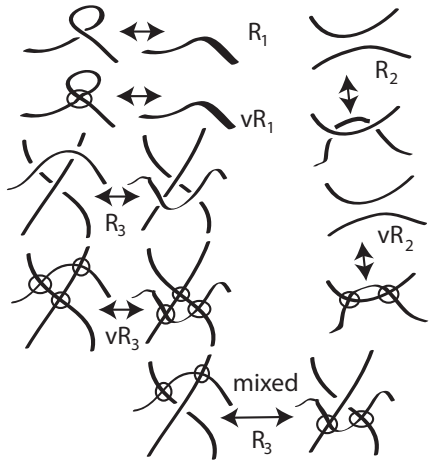}}
\caption{The Reidemeister moves}
\label{moves}
\end{figure}

{\bf Definition 1.} {\em A virtual diagram (or a diagram of a virtual link)} is the image of an immersion of a framed 4-valent graph in $\mathbb{R}^2$ with a finite number of intersections of edges.

{\bf Remark 1.} In Fig. \ref{moves} and what follows whenever depicting any moves/relastions on diagrams, we assume all diagrams to be identical outside a small domain. This small domain where the move takes place will be depicted; the remaining part will be omitted.

{\bf Definition 2.} {\em A virtual link} is an equivalence class of virtual diagrams modulo Reidemeister moves (see Fig. \ref{moves}).

Like classical links, virtual links consists of components. A virtual knot is a one-component virtual link.

Virtual links have a topological interpretation as curves in thickened compact surfaces without boundary. Regard each virtual crossing as a shorthand for a detour of one of the arcs
in the crossing through a 1-handle that has been attached to $\mathbb{R}^2$ of the original diagram. By interpreting each virtual crossing in this way, we obtain an embedding of a collection of circles into a thickened surface $S_{g}\times \mathbb{R}$  where $g$ is the number of virtual crossings in the original diagram
$L$, $S_{g}$ is a compact, connected, oriented surface of genus $g$ and $\mathbb{R}$ denotes the real line (see Fig. \ref{surface}).

\begin{figure}[ht]
\centerline{\includegraphics[scale=.5]{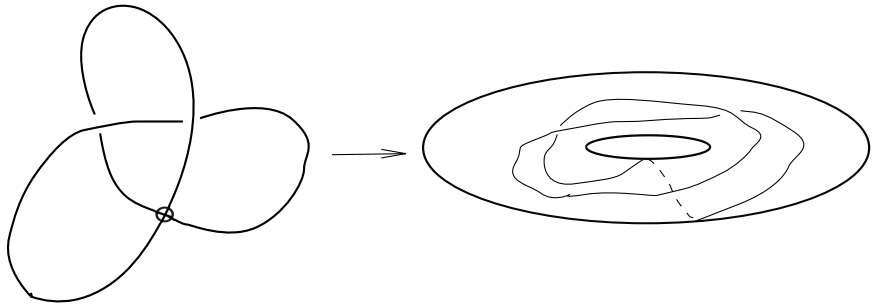}}
\caption{A topological interpretation of the virtual trefoil}
\label{surface}
\end{figure}

Surface embeddings are {\em stably equivalent} if one can be obtained from another by isotopy in the thickened surfaces, homeomorphisms of the surfaces and handle stabilization (see, for example,~\cite{KM}).

\begin{thm}{\bf[see, e.g., \cite{MI} and \cite{Kau}]}
Two virtual link diagrams are equivalent if and only if their corresponding surface embeddings are stably equivalent.
\end{thm}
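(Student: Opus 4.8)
The plan is to prove both implications through a common intermediary: for each virtual diagram $D$ I would construct an \emph{abstract link diagram}, namely a compact oriented surface $\Sigma_D$ with boundary obtained by thickening the underlying framed $4$-valent graph (a disk at every classical crossing, a band along every edge, with the over/under data retained and the virtual crossings simply ignored), so that $D$ lies in $\Sigma_D$ without any virtual crossings. Capping the boundary circles of $\Sigma_D$ by disks yields a closed oriented surface $\widehat{\Sigma}_D$, and pushing the over/under information into the normal direction realises the link as an embedding into $\widehat{\Sigma}_D\times\R$. This construction is the bridge: the theorem amounts to showing that $D\mapsto(\widehat{\Sigma}_D,\ \text{embedding})$ descends to a bijection between Reidemeister-equivalence classes of diagrams and stable-equivalence classes of surface embeddings.

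For the forward implication I would verify move by move that equivalent diagrams yield stably equivalent embeddings. Each classical Reidemeister move is supported in a disk and hence extends to an ambient isotopy of the embedding inside $\widehat{\Sigma}_D\times\R$. The purely virtual moves and the mixed detour move do not touch the over/under structure; on the level of the supporting surface they either reroute an arc through a handle (an isotopy, after identifying the two descriptions of that handle) or create and annihilate a handle carrying no strand, which is exactly a handle stabilisation or destabilisation. Checking that the finite list of generating moves in Fig.~\ref{moves} falls into these three cases --- ambient isotopy, surface homeomorphism, and stabilisation --- establishes that the stable class is unchanged.

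For the converse I would start from an embedding of circles into $F\times\R$ with $F$ closed and oriented, put it in general position and project to $F$, obtaining a link diagram on $F$ with only transverse double points. Presenting $F$ by a disk with $1$-handles attached, I would isotope the diagram into the disk; each passage of a strand over a $1$-handle is recorded, via a detour, as a string of virtual crossings, producing a virtual diagram $D_\ell$. It then remains to show that this assignment is well defined up to Reidemeister and virtual moves: changing the general position or the projection produces classical Reidemeister moves on the surface diagram, and an ambient isotopy of $\ell$ in $F\times\R$ likewise descends to Reidemeister moves on $F$ and hence to moves on $D_\ell$. The main obstacle, and the technical heart of the argument, is the remaining two ingredients of stable equivalence: showing that a homeomorphism of $F$ changes $D_\ell$ only by virtual Reidemeister and detour moves (which requires controlling the effect of the mapping class group on the chosen handle decomposition), and that a handle (de)stabilisation --- the addition or removal of an empty handle not met by $\ell$ --- corresponds precisely to the introduction or cancellation of removable virtual crossings by the virtual Reidemeister moves, with no further relation needed. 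Establishing that these geometric operations reduce \emph{exactly} to the moves of Fig.~\ref{moves} is what upgrades the correspondence from a one-sided surjection to the claimed bijection.
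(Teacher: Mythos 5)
The paper itself offers no proof of this theorem; it is quoted from the literature with a pointer to \cite{MI} and \cite{Kau}, so the only fair comparison is with the standard argument in those references (and in the abstract-link-diagram paper \cite{Kam}). Your route --- thickening the underlying $4$-valent graph to an abstract link diagram, capping off to get a closed surface, and then arguing both implications move by move --- is exactly that standard argument, so the overall strategy is the right one and the forward implication as you describe it (classical moves are supported in disks, virtual and detour moves are absorbed by isotopy, homeomorphism, and stabilisation of empty handles) is essentially complete once one checks the finite list in Fig.~\ref{moves}.

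The converse, however, is where your text states the theorem rather than proves it. You correctly isolate the issue --- well-definedness of $D_\ell$ --- but the resolution is not supplied, and it is not a formality. The concrete missing lemma is this: if $F$ is presented in two different ways as a disk with $1$-handles (equivalently, two systems of properly embedded arcs or simple closed curves cutting $F$ open to a disk, both transverse to the diagram), then the two resulting virtual diagrams differ only by detour moves and purely virtual Reidemeister moves. The standard way to get this is to show that any two such cut systems are related by a finite sequence of elementary slides of one handle over another (plus isotopy), and that a single slide changes the virtual diagram by a detour move; the action of the mapping class group is then subsumed, since a homeomorphism carries one cut system to another. Without this lemma your map from stable equivalence classes to virtual link types is only defined on representatives, and the ``bijection'' in your first paragraph is not yet established. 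A second, smaller gap: for destabilisation you should note that an empty handle can be arranged to be disjoint from the chosen cut system after an isotopy of the diagram in $F$, so that removing it literally does not change $D_\ell$; as written you assert the correspondence with ``removable virtual crossings'' without saying why the diagram can be pushed off the handle in a controlled way. Filling in these two points (both are done carefully in \cite{Kam} and \cite{Kau}) would turn your outline into a proof.
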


{\bf Remark 2.} In the present theorem below by equivalene between diagrams we just mean Reidemeister-move equivalence.

\section{The construction of the $sl(3)$ invariant of virtual knots}

Virtual knot theory, due to the non-trivial topology of the ambient space (a thickened 2-surface) has many new striking
invariants of topological and combinatorial nature which do not show up in the classical case. There are many of them
based on the notion of parity due to the second named author.
There are also new invariants of virtual knots due to the second named author and L.H.Kauffman~\cite{KM}; these invariants take values in
certain linear combinations of graphs; the latter, in turn, record a lot of topological information about all diagrams of a given virtual knot or link.
In particular, many properties of virtual links can sometimes be read from one diagram of such a link: e.g., non-invertibility, chirality etc.
When looking at some properties of a given graph which appears in the expansion of the invariant, one can conclude similar properties in
any diagram of the knot from the given class.

An approach undertaken in~\cite{ChM}, the second named author
and M.W.Chrisman have initiated the study of classical two-component links with one fibred component by using virtual knot theory
approach: a cyclic covering over a complement to the fibred component turns out to be a thickened surface, thus,
different combinatorial approaches can be applied. In the present paper, we use the ideas coming from~\cite{KM} and~\cite{ChM}
to construct graphical-valued invariants of 2-component links with one fibred component. In particular, we prove a {\em universal necessary condition of the minimality} of the diagrams of the virtual knots corresponding to the knots in the thickened surfaces. This condition applies for any virtual diagrams unlike result of the work~\cite{ChM} which works only for odd diagram~\cite{M}.

\subsection{A definition of the generalised Kuperberg bracket}
In the present subsection we give a definition of the generalised Kuperberg bracket. This definition will not be as general as it appeared in~\cite{ChM} but it will work both for virtual knots and for knots in a concrete thickened surfaces possibly with a boundary.

Every virtual diagram can be treated as a diagram in a thickened oriented compact 2-manifold. The latter can be with or without boundary. If two diagrams are equivalent in any category (knots in a given surfaces with or without boundary, or virtual knots), then they can be connected by a sequence of Reidemeister moves and vice versa. Thus, when we prove the equivalence of some functions on diagrams under generalised Reidemeister moves, we get an invariant in either category. Certainly, one can take the boundary consideration into account. We shall address this question in a separate paper. Nevertheless, we make some simplifications of our invariant on purpose. In particular, the graphs we are going to consider can be treated as graphs in surfaces (with or without boundary) which can make the structure more powerful. Nevertheless, our goal is to show the main effect that the generalised Kuperberg bracket can easily establish minimality of two-component classical links; to this end, it is quite unnecessary to take care of the whole structure. In a similar manner, the second named author constructed his ``parity bracket''~\cite{M}; in both cases, the main effect is achieved by means of graphs.

From now on, we treat virtual diagrams as diagrams in any category we like.

Let $\mathcal{S}$ be the collection of all connected trivalent
bipartite oriented graphs such that each trivalent vertex of it has either three inward-oriented edges or three outward-oriented edges. Moreover, all graphs in $\mathcal{S}$ are finite, but loops and multiple edges are allowed. Let $\mathcal{T} = \{t_{1},
t_{2},...\}$ be the (infinite) subset of connected graphs from $\mathcal{S}$
having neither bigons nor quadrilaterals. We call this graph having neither bigons nor quadrilaterals {\em a irreducible graph}. Otherwise, we call a graph {\em reducible}.

Let $\mathcal{M}$ be the module $\mathbb{Z}[A,A^{-1}][t_{1}, t_{2}, ...]$ of formal
commutative products of graphs from $\mathcal{T}$ with coefficients
that are Laurent polynomials. Besides, let $\mathcal{D}$ be the
collection of all diagrams of virtual links. Our invariant will be
valued in $\mathcal{M}$.

Below, we collect some results from~\cite{KM}, which we shall need in the sequel.

\begin{thm}{(\bf\cite{KM})}
There is unique map $f:\mathcal{D} \rightarrow \mathcal{M}$ which satisfies the relations $(I)-(VI)$ in Figure \ref{skobka}.
\end{thm}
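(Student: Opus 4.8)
The plan is to follow the template of Kauffman's original bracket argument, splitting the statement into an existence part (exhibit a concrete $f$) and a uniqueness part (show the relations leave no freedom). Since $\mathcal{D}$ consists of honest diagrams rather than equivalence classes, no Reidemeister invariance is needed here; the entire content is that the rewriting prescribed by $(I)$--$(VI)$ evaluates each diagram to a \emph{well-defined} element of $\mathcal{M}$.

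For existence I would define $f(D)$ by a two-stage state sum. First, I apply the crossing-resolution relations of Figure \ref{skobka} to each classical crossing, expressing it as a $\mathbb{Z}[A,A^{-1}]$-linear combination of local trivalent replacements; virtual crossings are simply retained as transversal double points of the underlying immersed graph. Expanding over all crossings yields a finite linear combination of graphs from $\mathcal{S}$. Second, whenever a resulting graph is reducible --- i.e. contains a disjoint circle, a bigon, or a quadrilateral --- I apply the corresponding evaluation relation (circle $\mapsto$ scalar, bigon $\mapsto$ scalar times an edge, quadrilateral $\mapsto$ a combination of two simpler graphs), iterating until every graph is irreducible and hence a generator $t_i$ of $\mathcal{M}$.

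The hard part is showing this second stage is \emph{confluent}: the irreducible linear combination obtained must not depend on the order in which reductions are performed. I would argue this by a diamond-lemma (Newman) scheme. Termination is the easy half: each of the three reductions strictly decreases a natural complexity of the graph (for instance the number of faces, or a lexicographic pair counting vertices and then reducible faces), so no infinite chain of reductions exists. Local confluence is the technical core: one must enumerate the finitely many ways two reducible configurations can overlap --- two adjacent or nested bigons, a bigon sitting on an edge of a quadrilateral, two quadrilaterals sharing an edge, a circle bounding a bigon, and so on --- and check in each such critical pair that the two possible reduction steps can be completed to a common value. This case analysis is exactly the combinatorial heart of Kuperberg's $A_2$/$sl(3)$ spider calculus, and I expect it to be where essentially all the work lies; the coefficients in $\mathbb{Z}[A,A^{-1}]$ must match on the nose in every overlap.

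Granting confluence, uniqueness is formal. Suppose $g$ is any map satisfying $(I)$--$(VI)$. Because $g$ respects each relation, the same rewriting that computes $f(D)$ also rewrites $g(D)$ into the identical linear combination of irreducible graphs. Since the irreducible graphs $t_1, t_2, \dots$ are by construction the free commuting generators of $\mathcal{M} = \mathbb{Z}[A,A^{-1}][t_1,t_2,\dots]$, distinct such combinations are distinct in $\mathcal{M}$; hence $g(D) = f(D)$ for every $D$, giving $g = f$. The only subtlety is that this argument tacitly invokes confluence once more, since it must make sense of ``the'' linear combination to which a reducible graph reduces --- which is precisely why the confluence step carries the whole theorem.
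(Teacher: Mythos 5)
The paper does not actually prove this statement: it is imported verbatim from \cite{KM} (``Below, we collect some results from~\cite{KM}\dots''), and the only trace of an argument in the text is the later remark that the relations $(I)$--$(VI)$ are ``reductive, i.e., every graph can be uniquely represented as a linear combination of irreducible graphs.'' Your proposal correctly reconstructs the strategy behind that remark and behind the construction in \cite{KM}: resolve all classical crossings by $(I)$--$(II)$ to get a $\mathbb{Z}[A,A^{-1}]$-combination of trivalent bipartite graphs, then reduce circles, bigons and quadrilaterals by $(III)$--$(VI)$ until only elements of $\mathcal{T}$ (hence monomials in $\mathcal{M}$) remain, with uniqueness following formally once the reduction is known to be terminating and confluent. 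That decomposition, and your observation that no Reidemeister invariance is involved at this stage, are both right.

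The genuine gap is that local confluence --- which, as you yourself say, ``carries the whole theorem'' --- is only named, not verified, and in this setting it is not simply a citation of Kuperberg's planar spider calculus. The graphs here are \emph{abstract} (the paper stresses that no surface embedding is recorded), so ``bigon'' and ``quadrilateral'' mean cycles of length $2$ and $4$ in an abstract multigraph, and the critical pairs include overlaps that cannot occur for planar webs: a triple edge (three mutually overlapping bigons), a square two of whose sides form a bigon, two squares sharing a pair of opposite edges, and so on. Each of these must be checked with the exact coefficients of $(III)$--$(VI)$, and this is where the content of the theorem lives; without it neither existence (well-definedness of $f(D)$) nor uniqueness is established. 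Two smaller points: your first proposed termination measure, ``the number of faces,'' is not defined for an abstract graph --- use the number of vertices (strictly decreased by the bigon and square moves) together with the number of circle components; and for the uniqueness argument to parse, the map must be understood as defined on mixed diagrams carrying both crossings and trivalent vertices, since relations $(III)$--$(VI)$ never apply to an element of $\mathcal{D}$ itself.
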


\begin{figure}[ht]
\centerline{\includegraphics[scale=.7]{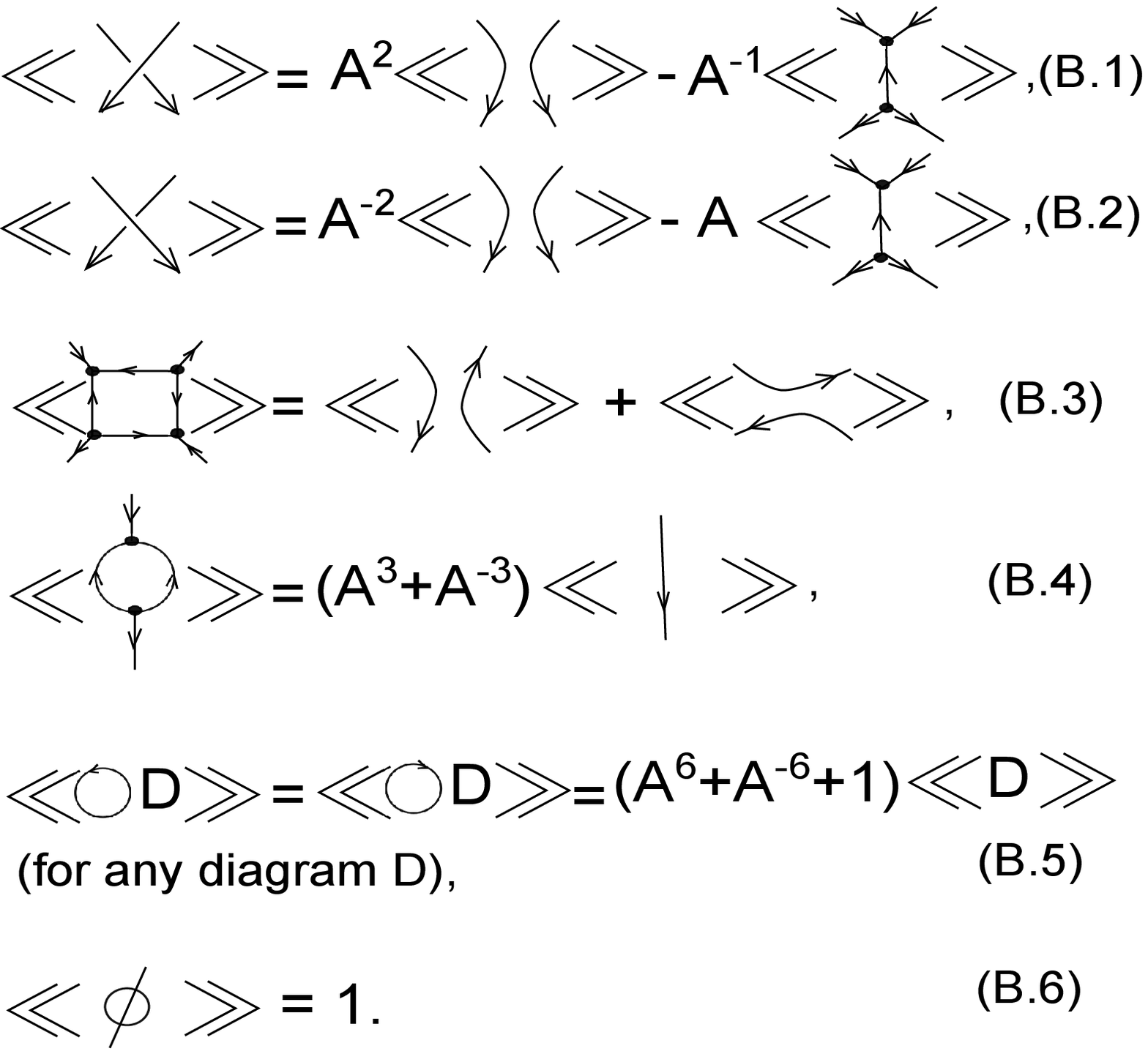}}
\caption{Skein relations for the Kuperberg bracket}
\label{skobka}
\end{figure}

If $L$ be a diagram of the virtual links, then in the sequel we will call its image under the map $f$ {\em a Kuperberg bracket (or non-generalised Kuperberg bracket)} and denote as follows: $\ll L \gg := f(L)$.

{\bf Remark 3.} The Kuperberg bracket was first introduced G. Kuperberg~\cite{Kup} from classical links. We note that value of the Kuperberg bracket for a classical link situated in $\mathbb{Z}[A,A^{-1}]$.

We define {\em the bracket $[[\cdot]]$ (generalised Kuperberg bracket)} as follows.

Let $L$ be an oriented virtual diagram. We can define a polynomial $[[L]]$ as follows:

$$
[[L]]=A^{-8w(L)}\cdot \ll L \gg ,
$$
where $w(L)$ be {\em a writhe number} of the diagram $L$. Let us define $w(L)$ as follows. With each classical crossing of $L$ we associate $+1$ or $-1$ as shown in Fig. \ref{cr}. This number is called the local writhe number. Taking the sum of these numbers at all vertices, we get {\em the writhe number} $w(L)$.

\begin{figure}[h]
\begin{minipage}[h]{0.49\linewidth}
\center{\includegraphics[width=0.3\linewidth]{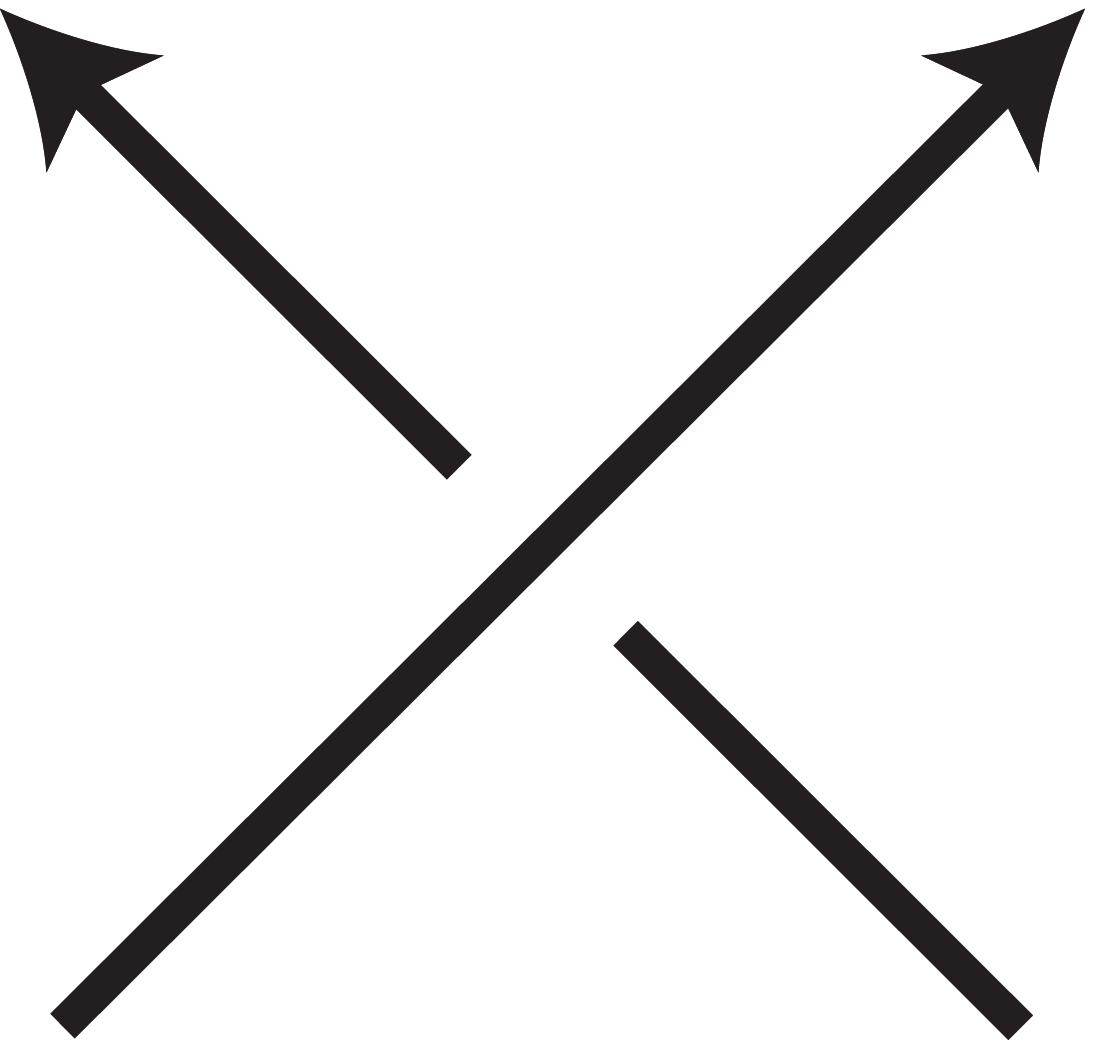} \\ ''+1''}
\end{minipage}
\hfill
\begin{minipage}[h]{0.49\linewidth}
\center{\includegraphics[width=0.3\linewidth]{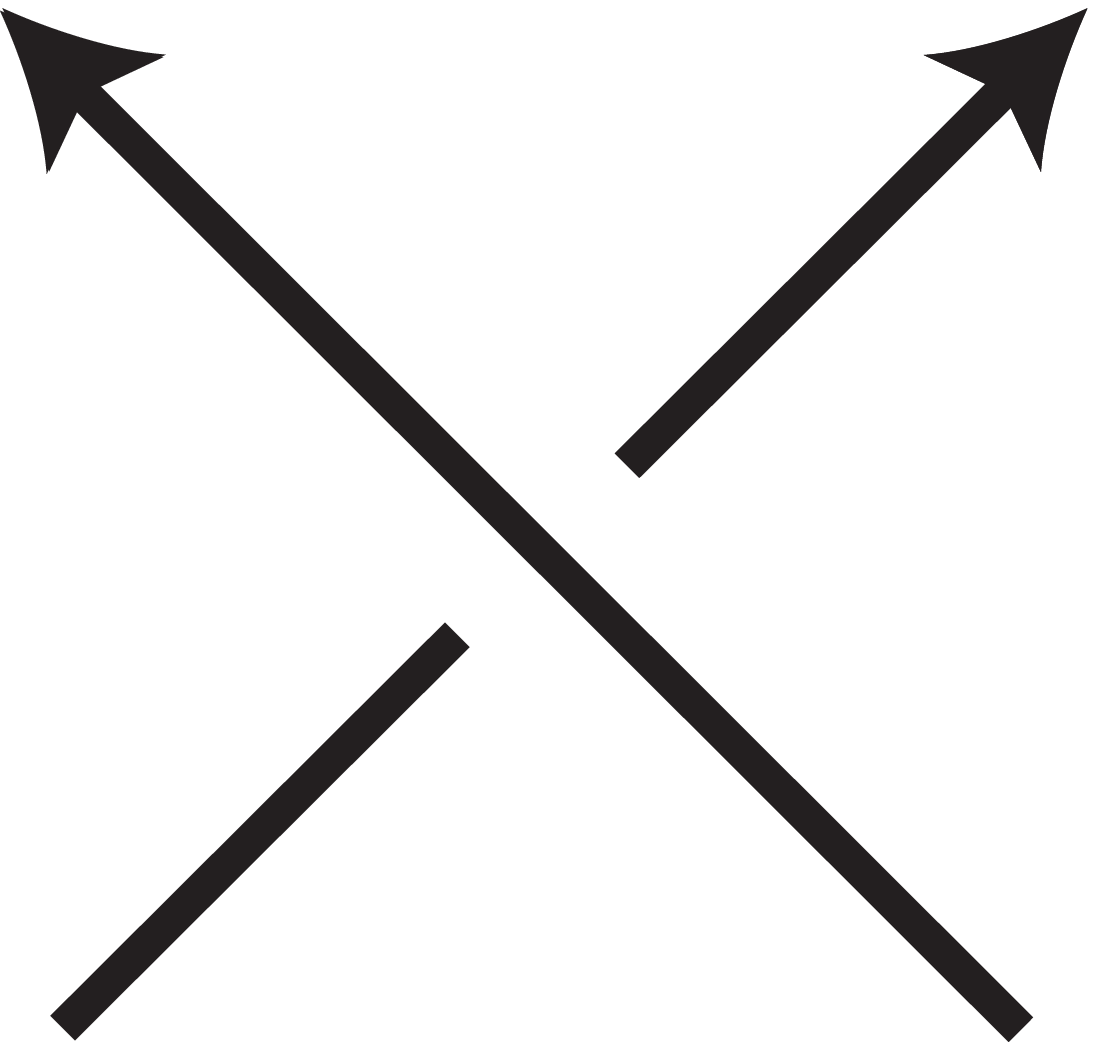} \\ ''-1''}
\end{minipage}
\caption{}
\label{cr}
\end{figure}

\subsection{Basic properties of the generalised Kuperberg bracket}

In the present subsection we collect the some basic properties of the generalised Kuperberg
bracket, which we shall need in the sequel. For proofs, see~\cite{KM}.

\begin{thm}
The bracket $[[.]]$ is invariant under all Reidemeister moves.
\end{thm}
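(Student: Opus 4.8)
The plan is to reduce invariance of the normalised bracket $[[L]]=A^{-8w(L)}\cdot\ll L\gg$ to the already-understood behaviour of the unnormalised bracket $\ll\cdot\gg=f$ under each type of move, and then to check that the writhe factor $A^{-8w(L)}$ exactly cancels the only discrepancy. By Theorem~4 the map $f$ is the unique map $\mathcal{D}\to\mathcal{M}$ satisfying the skein relations $(I)$–$(VI)$, so every claim about how $\ll\cdot\gg$ transforms under a move may be verified by expanding the two local pictures of the move through those relations and comparing the resulting formal $\Z[A,A^{-1}]$-combinations of irreducible graphs $t_i$ in the module $\mathcal{M}$; this is precisely the computation carried out in~\cite{KM}. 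The strategy is therefore: (i) show $\ll\cdot\gg$ is already invariant under every move except the classical first Reidemeister move; (ii) compute the scaling of $\ll\cdot\gg$ under the first move; (iii) track $w$ under all moves; and (iv) combine.

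For step~(i) I would treat the moves in groups. The purely virtual Reidemeister moves and the mixed (detour) move act only on virtual crossings, which enter none of the smoothing relations producing graph-generators, so $\ll\cdot\gg$ is unchanged; here one invokes Theorem~1 and Remark~2 to know that these are exactly the extra generators of the equivalence. For the classical second move, expanding both strands through $(I)$–$(VI)$ and applying the bigon/quadrilateral reduction rules (which rewrite every reducible graph as a Laurent-polynomial multiple of an irreducible $t_i$) yields the same element of $\mathcal{M}$ before and after; the classical third move is handled the same way, the two expansions agreeing term by term in $\mathcal{M}$. For step~(ii), expanding a single curl shows that $\ll\cdot\gg$ is \emph{not} invariant under the classical first move: if $L\to L'$ adds one classical crossing of local writhe $\varepsilon=\pm1$, then $\ll L'\gg=A^{8\varepsilon}\ll L\gg$, the exponent $8$ being forced by the relations $(I)$–$(VI)$ and being exactly the quantity that the normalisation $A^{-8w}$ is designed to correct.

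For step~(iii) I use that $w(L)$ is, by definition, the sum of local signs $\pm1$ over the \emph{classical} crossings only (Fig.~\ref{cr}). Hence $w$ is unchanged by the virtual moves and the mixed move (no classical crossings are altered), unchanged by the classical second move (it creates or destroys a pair of crossings of opposite sign, net contribution $0$), and unchanged by the classical third move (the three participating crossings and their signs persist). Under the classical first move $w(L')=w(L)+\varepsilon$. Combining in step~(iv): for every move of the first group $\ll\cdot\gg$ and $w$ are both invariant, so $[[\cdot]]$ is invariant; for the first move,
\[
[[L']]=A^{-8w(L')}\ll L'\gg=A^{-8(w(L)+\varepsilon)}\,A^{8\varepsilon}\ll L\gg=A^{-8w(L)}\ll L\gg=[[L]],
\]
so $[[\cdot]]$ is invariant there as well, proving the theorem.

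The main obstacle is concentrated in steps~(i) and~(ii): verifying directly from the generators $(I)$–$(VI)$ that the classical second and third moves induce genuine identities in the graph module $\mathcal{M}$, and pinning down the precise scaling exponent under the first move. This requires a careful bookkeeping of the reduction of bigons and quadrilaterals to the irreducible generators $t_i$ and of the accompanying Laurent-polynomial coefficients, and is exactly where one leans on the computations of~\cite{KM}; once these local identities are in hand, steps~(iii) and~(iv) are routine sign-counting and cancellation.
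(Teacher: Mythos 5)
Your proposal is correct and follows essentially the same route as the paper: the paper's proof is a one-line appeal to the fact that relations $(I)$--$(VI)$ yield invariance under the Reidemeister moves, with the local computations deferred to the cited work of Kauffman and Manturov, exactly as you do. Your write-up merely makes explicit the standard decomposition (invariance of $\ll\cdot\gg$ under all moves except the classical first move, and cancellation of the $A^{\pm 8}$ curl factor by the $A^{-8w(L)}$ normalisation) that the paper leaves implicit.
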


Proof of the invariance of the Kuperberg bracket follows from the
fact that the relations {\em (I)--(VI)} yield invariance under the Reidemeister moves.

We have faced a very important phenomenon:

{\em The relations (I)-(VI) are all reductive, i.e., every graph can
be uniquely represented as a linear combination of irreducible
graphs. Moreover, if a graph itself is irreducible, then it
evaluates to itself. So, if we just dealt with three-valent bipartite
graphs (not knots) we would easily get a desired property we looked
for in the introduction: one can minimise a graph by using some
relation until one gets a linear combination of irreducible graphs. All these irreducible graphs are subgraphs of the initial graph.

Now, if we want to deal with (virtual) knots or knots in any concrete thickened surface with or without boundary, we should first
represent them as linear combinations of graphs by using {\em (I),
(II)}, and then start to simplify the resulting graph.

Note that our graphs are abstract; they do not record any information about their embedding.

The crucial observation is that in the virtual knot theory case
irreducible graphs can appear, unlike the classical case: in the
classical case we deal with planar graphs only by Euler characteristic reasons; such trivalent
bipartite graphs should necessarily have bigons or quadrilaterals
inside; thus, they can not be irreducible!}

The generalised Kuperberg bracket can be also used to solve of the problem of the minimality of virtual diagrams.

Let, as before, $K$ be a virtual knot diagram. With every classical crossing of $K$, we associate two
local states: the oriented one and the unoriented one. The oriented one corresponds to the first term, and the unoriented one corresponds to the second term in the relations {\em (I)--(II)} (see Fig. \ref{skobka}). A state of the diagram is a choice of local state for each crossing in the diagram. Thus, every state is a trivalent bipartite graph. Note that we treat these graphs abstractly, regardless any surface embedding.

By the {\em unoriented state $K_{us}$} ({\em $K_{us}$-graph}) of $K$ we mean the state of $K$ where all crossings are resolved in a way where edge is added (see Figure \ref{kus}).

\begin{figure}[ht]
\centerline{\includegraphics[scale=.3]{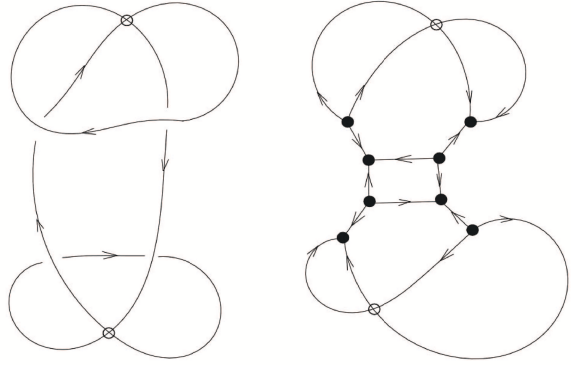}}
\caption{The Kishino knot and its $K_{us}$-state}
\label{kus}
\end{figure}

We note that a $K_{us}$-graph is an important characteristic of the virtual diagram. If $K$ is an $n$-crossing virtual diagram then $K_{us}$ has $2n$ vertices. All other diagrams in the expansion of $[[K]]$ have smaller number of vertices. Thus, after applying $(III)-(VI)$, $K_{us}$ will evaluate to itself, whence all other terms of $[[K]]$ will be represented as linear combinations of graphs with the number of vertices strictly less than $2n$. In this sense, $K_{us}$ can be thought of as the leading term of $[[K]]$, an and for knots with $K_{us}$ irreducible, it is much more convenient to compare their $K_{us}$ than the whole bracket $[[\cdot]]$.

The following proposition can be used to establish the minimality of virtual diagrams~\cite{KM}.

\begin{prop}
Assume for a virtual diagram $K$ with $n$ classical crossings the graph $K_{us}$ is irreducible. Then every knot $K'$ equivalent to $K$ has a state $s$ such that
$K'_{us}$ contains $K_{us}$ as a subgraph. In particular, $K$ is minimal, and all minimal diagrams of this knot have the same number of crossings.
\end{prop}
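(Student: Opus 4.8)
The plan is to combine the Reidemeister invariance of $[[\cdot]]$ established above with a filtration of $\mathcal{M}$ by the number of trivalent vertices. Assign to a monomial $g_{1}g_{2}\cdots g_{k}$ (a product of irreducible graphs) the \emph{degree} equal to the total number of its trivalent vertices, so that $\mathcal{M}$ becomes graded with the coefficients in $\Z[A,A^{-1}]$ sitting in degree zero. First I would isolate the decisive feature of the leading state: among the $2^{n}$ states of the $n$-crossing diagram $K$, the unoriented state $K_{us}$ is the unique one with the maximal number $2n$ of vertices, and it enters the state sum with a unit coefficient (a single power of $A$). Since $K_{us}$ is assumed irreducible, the reductive relations $(III)$--$(VI)$ leave it fixed, and moreover these relations never increase the number of vertices. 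As $K_{us}$ is the unique state of degree $2n$ while every other state has degree $<2n$, the degree-$2n$ part of $[[K]]$ is exactly a unit multiple of $K_{us}$, and in particular is nonzero.

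Let $K'$ be any diagram equivalent to $K$, with $m$ classical crossings, so that $[[K]]=[[K']]$ by invariance. In the state sum for $K'$ the state of largest degree is $K'_{us}$, with $2m$ vertices, and every other state has strictly fewer. Because $(III)$--$(VI)$ never raise the vertex count, and because the bigon and quadrilateral relations strictly lower it, the top degree of $[[K']]$ is at most $2m$, with equality precisely when $K'_{us}$ is already irreducible. Comparing top degrees through $[[K]]=[[K']]$ gives $2n\le 2m$, that is $m\ge n$, which is the asserted minimality. When $m=n$ the common top degree $2n=2m$ forces $K'_{us}$ to be irreducible and to coincide with the unique top monomial $K_{us}$, so $K'_{us}=K_{us}$; this is precisely the statement that every diagram attaining the minimal crossing number carries the same $K_{us}$-graph.

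For the strong (subdiagram) conclusion, suppose $m>n$. Since the degree-$2n$ part of $[[K']]$ equals that of $[[K]]$, namely a unit multiple of $K_{us}$, the monomial $K_{us}$ genuinely survives in the reduced expansion of $[[K']]$; its coefficient being pinned to a unit, no cancellation among the states can annihilate it, so some state $s$ of $K'$ reduces to a combination containing $K_{us}$. The conclusion now rests on two subgraph statements. (a) Every state $s$ of $K'$ is a topological subgraph of $K'_{us}$: at each crossing the oriented resolution differs from the unoriented one by the deletion of the single added edge together with the suppression of its two (now bivalent) endpoints, which is exactly edge deletion followed by smoothing. (b) Each application of $(III)$--$(VI)$ replaces a graph by a linear combination of topological subgraphs of it. Granting (a) and (b), the irreducible graph $K_{us}$ obtained from $s$ satisfies $K_{us}\subseteq s\subseteq K'_{us}$, which furnishes the desired state $s$ together with the containment $K_{us}\subseteq K'_{us}$.

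The step I expect to be the main obstacle is the verification of (b), since it must be checked relation by relation and the quadrilateral relations are the delicate ones. One reads off the pictures that removing a circle deletes a vertex-free component; that the bigon relation amounts to erasing one of the two parallel edges and suppressing the two freed bivalent vertices; and that each quadrilateral relation amounts to erasing a pair of opposite edges of the four-cycle and suppressing the four freed bivalent vertices. In every case the right-hand graphs are obtained from the left-hand configuration purely by edge deletions and smoothings, outside of which nothing changes, so each is a topological subgraph of the original; iterating this along any reduction sequence preserves the property and establishes (b), completing the proof.
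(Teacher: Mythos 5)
Your argument is correct and is essentially the paper's own approach (the paper only sketches it in the paragraph preceding the Proposition and defers to [KM]): you grade $\mathcal{M}$ by vertex count, observe that $K_{us}$ is the unique top-degree state and survives reduction because it is irreducible, and then use invariance of $[[\cdot]]$ to force $K_{us}$ to appear in the reduced expansion of any equivalent diagram, with the subgraph containment coming from the fact that states and the reductions $(III)$--$(VI)$ only delete edges and suppress bivalent vertices. Your explicit verification of points (a) and (b) fills in exactly the details the paper leaves implicit, so no gap remains.
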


We now consider some applications of the generalised Kuperberg bracket for a classical two component link $L=J\sqcup K$ such that $lk(L)=0$, when $J$ is fibred.

\section{Some applications of the generalised Kuperberg bracket to study properties of some classical two component links}

\subsection{Fibred knots}

In the present subsection we recall the definition and consider the construction which shall need in the sequel.

{\bf Definition 3.} A classical knot $J$ is {\em fibred} if there a smooth map $b:\mathbb{S}^3\rightarrow D^2=\{z \in \mathbb{C}^2;|z|\leq 1\}$, $0 \in D^2$ is a regular value, $b^{-1}(0)=K$ and a map $\overline{b}:\mathbb{S}^3\backslash J \rightarrow \mathbb{S}^1$, which by the formula $\overline{b}=\frac{b(x)}{|b(x)|}$, is a smooth fibration.

Let $\alpha \in \mathbb{S}^1$ and $[0;\alpha]$ be a segment connecting $0$ and $\alpha$ in $D^2$. Then $b^{-1}([0;\alpha])$ is the Seifert surface of the knot $J$. It is called {\em a fibre} of the fibred knot $J$.

Let $J$ be a knot in $\mathbb{S}^3$ such that the knot complement of $J$
is covered by a thickened surface $\pi:\Sigma \times \mathbb{R} \rightarrow \mathbb{S}^{3} \backslash J$, where $\Sigma$ be a some two-dimension compact orientable surface. Furthermore, let   $L=J\sqcup K$ $lk(L)=0$ be a classical link such that $\pi^{-1}(K)=\bigcup_{i=0}^{\infty} K_{i}$, where each $K_{i}$ is a knot in $\Sigma \times \mathbb{R}$ such that $K_{i}$ is ambient isotopic to $K_{j}$ in $\Sigma \times \mathbb{R}$ for all $i,j$. Links $L=J\sqcup K$ satisfying these properties can also be studied by using virtual knot theory. As there is no ambiguity in the choice of $K_{i}$ (up to isotopy), the stabilization of any $K_{i}$ is a well defined virtual knot $\hat{K_{i}}$.

Now let $J$ be a fibred knot. If the knot $K$ is ''close'' to a knot diagram on a minimal genus Seifert surface $\Sigma$ for $J$, then there is the unique infinite cyclic cover of $\mathbb{S}^3 \backslash J$ of the form $\Sigma \times \mathbb{R}$ relative to a given fibration $\overline{b}$ (see~\cite{ChM} and~\cite{W}). If $K'$ is a knot in $\Sigma \times \mathbb{R}$, let us project it to a fiber $\Sigma$ of the knot complement of $J$. This gives a knot diagram on $\Sigma$. Now push the double points of the projection a little off the Seifert surface in $\mathbb{S}^3$ so that we get a knot $K$ in $\mathbb{S}^3$. $K$ lifts to $K'$ in the infinite cyclic cover and every lift of $K$ is ambient isotopic to $K'$. A link $L=J\sqcup K$ may be studied using virtual knot theory by stabilizing the knot $K'$ in $\Sigma \times \mathbb{R}$ to a virtual knot $\hat{K}$.

It can be shown that the map $g: J\sqcup K \rightarrow \hat{K}$ has a very important property: if $K_1$ and $K_2$ are ambient isotopic by an
isotopy which is the identity in a regular neighborhood of $J$, then virtual diagrams of virtual knots $\hat{K_1}$ and $\hat{K_1}$ are equivalent~\cite{ChM}.

In the next we used the map $g: J\sqcup K \rightarrow \hat{K}$ to study some properties of classical two component links.

\subsection{The generalised Kuperberg bracket and two-component classical links}

We first consider the application of generalized Kuperberg bracket for recognition of links of the form $L=J\sqcup K$ where $lk(L)=0$ such that $J$ is fibred.

Assume $J$ is fixed once forever.

\begin{thm}
Let $L_{1}=J\sqcup K_{1}$ and $L_{2}=J\sqcup K_{2}$ be classical two component links such that $lk(L_{1})=lk(L_{2})=0$, where $J$ be a fibred knot with a Seifert surface $\Sigma$. Furthermore, let $\hat{K_1}$ and $\hat{K_2}$ be virtual diagrams corresponding to $K_{1}'$ and $K_{2}'$ in $\Sigma \times \mathbb{R}$ , where $\pi(K_{1}')=K_{1}$ and $\pi(K_{2}')=K_{2}$. Then if $[[\hat{K_1}]]\neq[[\hat{K_2}]]$ that $K_{1}$ and $K_{2}$ are non ambient isotopic by an smooth isotopy which is the identity in a regular neighborhood of $J$.
\end{thm}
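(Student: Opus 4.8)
The plan is to establish the contrapositive chain through the invariance properties already collected. The statement asserts: if the generalised Kuperberg brackets $[[\hat{K_1}]]$ and $[[\hat{K_2}]]$ differ, then $K_1$ and $K_2$ cannot be ambient isotopic by an isotopy that is the identity on a regular neighborhood of $J$. So I would argue by contraposition: assume such an isotopy exists, and deduce $[[\hat{K_1}]] = [[\hat{K_2}]]$. The whole proof is a composition of two facts that are already in hand from the excerpt, so the task is really to assemble them correctly rather than to prove anything from scratch.

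First I would invoke the property of the map $g \colon J \sqcup K \rightarrow \hat{K}$ recorded just above (attributed to~\cite{ChM}): if $K_1$ and $K_2$ are ambient isotopic via an isotopy that is the identity in a regular neighborhood of $J$, then the virtual diagrams $\hat{K_1}$ and $\hat{K_2}$ are Reidemeister-equivalent as virtual knot diagrams. Concretely, the isotopy rel $J$ lifts to an ambient isotopy of the lifts $K_1'$ and $K_2'$ in the infinite cyclic cover $\Sigma \times \mathbb{R}$, and stabilization then sends ambient-isotopic curves in the thickened surface to stably equivalent surface embeddings, which by Theorem~1 correspond to Reidemeister-equivalent virtual diagrams. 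This is the geometric heart of the argument, but it is entirely supplied by the cited construction, so I would simply cite it.

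Next, having reduced to the statement that $\hat{K_1}$ and $\hat{K_2}$ are equivalent virtual diagrams, I would apply Theorem~4, which says the bracket $[[\cdot]]$ is invariant under all Reidemeister moves. Since $\hat{K_1}$ and $\hat{K_2}$ are connected by a finite sequence of Reidemeister moves, and each move preserves the value of $[[\cdot]]$ in $\mathcal{M}$, we conclude $[[\hat{K_1}]] = [[\hat{K_2}]]$. Contrapositively, $[[\hat{K_1}]] \neq [[\hat{K_2}]]$ forces the non-existence of the isotopy rel $J$, which is exactly the claim.

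The \textbf{main obstacle} I anticipate is not logical but one of hygiene: I must be careful that the notion of equivalence entering Theorem~4 matches the equivalence produced by the map $g$. By Remark~2, equivalence between virtual diagrams in the relevant theorems means Reidemeister-move equivalence, and the construction of $g$ produces exactly such equivalence (via stable equivalence and Theorem~1), so the two match and the composition goes through cleanly. The only subtle point worth stating explicitly is that the isotopy being the identity on a neighborhood of $J$ is precisely what guarantees the lift to $\Sigma \times \mathbb{R}$ is well-defined and isotopic on each sheet; without this hypothesis the correspondence $g$ would not be canonical. I would therefore flag that this hypothesis is used exactly once, at the lifting step, and is indispensable there.
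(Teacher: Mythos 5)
Your proposal is correct and follows essentially the same route as the paper's own proof: both arguments assume the isotopy rel a neighborhood of $J$ exists, invoke the property of the map $g$ from Subsection 3.1 (cited from~\cite{ChM}) to conclude that $\hat{K_1}$ and $\hat{K_2}$ are equivalent virtual diagrams, and then apply the invariance of $[[\cdot]]$ under Reidemeister moves to get $[[\hat{K_1}]]=[[\hat{K_2}]]$, contradicting (or contraposing) the hypothesis. Your additional remarks on where the rel-$J$ hypothesis enters and on matching the notion of equivalence are sound elaborations of the same argument, not a different approach.
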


\begin{proof}
Assume the contrary. Supposing $K_{1}'$ and $K_{2}'$ are ambient isotopic by an isotopy which is the identity in a regular neighborhood of $J$. Then $[[\hat{K_1}]]=[[\hat{K_2}]]$ because if $K_{1}$ and $K_{2}$ are ambient isotopic by an isotopy which is the identity in a regular neighborhood of $J$, then virtual diagrams $\hat{K_1}$ and $\hat{K_2}$ are equivalent (see Subsection 3.1.). Therefore, $[[\hat{K_1}]]=[[\hat{K_2}]]$ (see Theorem 3). Contradiction.
\end{proof}

{\bf Example 1.} Let $L_{1}=J\sqcup K_{1}$ and $L_{2}=J\sqcup K_{2}$ be classical two component links such that $lk(J,K)=0$, where $J$ is a fibred knot (for example, right-handed trefoil~\cite{PS}) with the Seifert surface $\Sigma$ of the minimal genus $g=1$ (torus). Also let $\hat{K_1}$ and $\hat{K_2}$ be virtual diagrams corresponding to knots $K_{1}'$ and $K_{2}'$ in $\Sigma \times \mathbb{R}$ (see Fig. \ref{ex1}).

\begin{figure}[ht]
\centerline{\includegraphics[scale=.5]{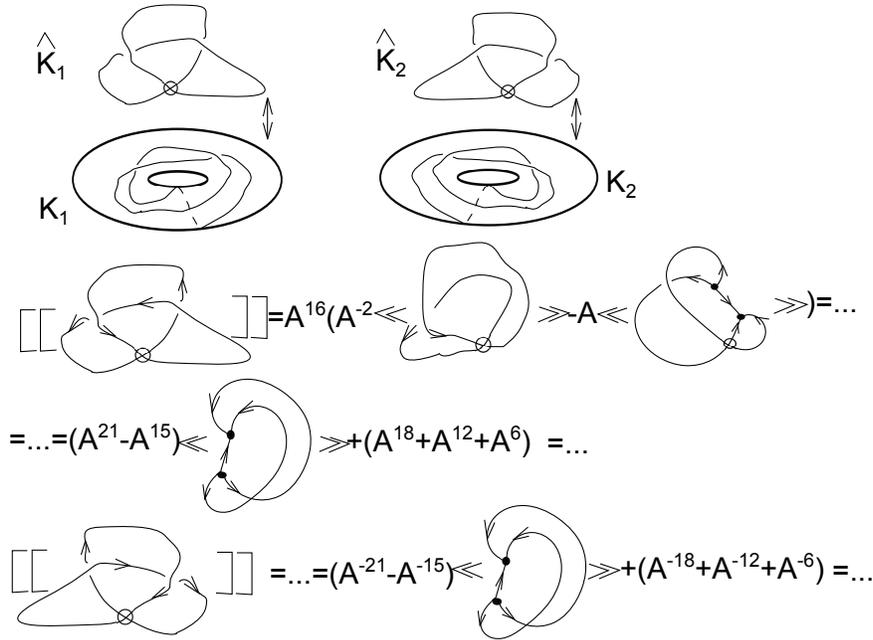}}
\caption{A calculation of the Generalized Kuperberg Bracket for $\hat{K_1}$ and $\hat{K_2}$}
\label{ex1}
\end{figure}

From the further obvious calculation, it follows that $[[\hat{K_1}]]\neq[[\hat{K_2}]]$. Thus $K_{1}$ and $K_{2}$ are non ambient isotopic by an isotopy which is the identity in a regular neighborhood of $J$.

\subsection{The Minimality Problem}

Let us now consider an application of our construction to establish minimality of some diagrams of classical links. Namely, we find a knot diagram $K$ having a minimum number of double points of the projection on the Seifert surface of the fibred knot $J$. Then it follows straightforwardly that there are in fact infinitely many examples of such sort.

In this subsection we establish {\itshape a new complexity} of knots diagrams~\cite{ChM}.

{\bf Definition 4}. Let $J$ be a fibred knot and $\Sigma$ be a Seifert surface of $J$. Let $K$ be a knot diagram on $\Sigma$. We will say that $K$ is {\em minimal with respect to (w.r.t.)} $J$ if for all $K'$ is ambient isotopic to $K$ in $\Sigma \times \mathbb{R}$, we have that the number of crossings of the diagram of $K$ on $\Sigma$ is less than or equal to the number of crossings of $K'$ of the projection on $\Sigma$.

The following theorem is {\em a sufficient condition of minimality} of the diagram of the knot in thickened surface and is the {\em main result} of the present paper. As it was noted above, this condition apply for any virtual diagrams unlike result of the work~\cite{ChM} which works only for irreducible odd diagram~\cite{M}.

\begin{thm}
Let $L=J \sqcup K$ be a classical two component link such that $lk(L)=0$, where $J$ is a fibred knot. Furthermore let as before $\Sigma$ be a Seifert surface of the fibred knot $J$, and $\hat{K}$ be a virtual diagram corresponding to the knot $K'$ in $\Sigma \times \mathbb{R}$, where $\pi(K')=K$. Then if the graph $K_{us}$ of the diagram of $\hat{K}$ is irreducible then $K'$ has a minimal number of double points of the projection on $\Sigma$, i.e. $\hat{K}$ is minimal w.r.t. $J$.
\end{thm}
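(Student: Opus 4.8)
The plan is to reduce the geometric minimality statement about double points on $\Sigma$ to the purely combinatorial minimality of the virtual diagram $\hat{K}$ furnished by Proposition 1. Throughout, write $d(K')$ for the number of double points of the projection of a knot $K'\subset\Sigma\times\mathbb{R}$ onto the fiber $\Sigma$, and $c(\hat{K})$ for the number of classical crossings of the virtual diagram $\hat{K}$. The first step is to fix the dictionary between the two pictures: when $K'$ is projected to $\Sigma$ and then converted into $\hat{K}$ (by cutting $\Sigma$ along a system of curves and inserting virtual crossings where $K'$ meets these curves), the double points of the projection become precisely the classical crossings, while the genus of $\Sigma$ is recorded only by the virtual crossings. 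I would thus first establish
\[
d(K') = c(\hat{K}),
\]
noting that this count is unaffected by the stabilization used to define $\hat{K}$, since stabilization removes only empty handles and these carry no classical crossings.

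Next I would run the core argument. Let $K''$ be any knot ambient isotopic to $K'$ in $\Sigma\times\mathbb{R}$, and let $\hat{K''}$ be the corresponding virtual diagram. By the property of the map $g$ recalled in Subsection 3.1, an ambient isotopy that is the identity in a regular neighborhood of $J$ produces equivalent virtual diagrams, so $\hat{K}$ and $\hat{K''}$ are equivalent by Reidemeister moves. Now I invoke Proposition 1: since $K_{us}$ of $\hat{K}$ is irreducible, every diagram equivalent to $\hat{K}$ --- in particular $\hat{K''}$ --- carries a state whose $K_{us}$-graph contains $K_{us}$ as a subgraph, whence $\hat{K}$ realizes the minimal number of classical crossings over its entire equivalence class, i.e. $c(\hat{K})\le c(\hat{K''})$.

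Combining the two steps finishes the proof: applying the dictionary to both $K'$ and $K''$,
\[
d(K') = c(\hat{K}) \le c(\hat{K''}) = d(K''),
\]
so no ambient isotopy in $\Sigma\times\mathbb{R}$ can decrease the number of double points on $\Sigma$; this is exactly minimality with respect to $J$ in the sense of Definition 4.

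The step I expect to be the main obstacle is the passage between ambient isotopy in the \emph{fixed} thickened surface $\Sigma\times\mathbb{R}$ and the full virtual (stable) equivalence underlying Proposition 1. Stable equivalence permits handle stabilization and hence a priori acts on a larger class of diagrams than ambient isotopy in $\Sigma\times\mathbb{R}$; I must verify that the ambient-isotopy class of $K'$ maps into a single virtual-equivalence class --- this is where the $g$-correspondence and the hypothesis $lk(L)=0$, which guarantees that $K$ lifts to a genuine closed curve $K'$ with a well-defined stabilization $\hat{K}$, are essential --- and that the minimality over the larger virtual class given by Proposition 1 therefore descends to minimality over the smaller geometric class. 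The only remaining point needing care is the claim that stabilization never alters the classical crossing count, which should follow directly from the definition of handle stabilization.
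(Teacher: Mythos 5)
Your proposal is correct and follows essentially the same route as the paper: both reduce the geometric statement to Proposition 1 via the correspondence $K'\mapsto\hat{K}$ of Subsection 3.1, using that an ambient isotopy in $\Sigma\times\mathbb{R}$ yields Reidemeister-equivalent virtual diagrams and that an irreducible $K_{us}$ forces crossing-minimality over the whole equivalence class. Your version merely makes explicit the identification of double points with classical crossings and the stabilization subtlety, which the paper leaves implicit.
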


\begin{proof}
Let $L=J \sqcup K$ be a classical two component link such that $lk(L)=0$, $J$ be a fibred knot with a unique Seifert surface $\Sigma$. Moreover, let $K'$ be a knot in $\Sigma \times \mathbb{R}$ such that $\pi(K')=K$. We know that $K'$ gives rise to the virtual knot with diagram $\hat{K}$ (see Subsection 3.1.). Then the statement of the theorem follows from Proposition 1: if a diagram $\hat{K}$ is non-minimal then its $K_{us}$-graph is reducible. Indeed, assume the contrary $[[\hat{K}]]\neq[[\hat{K''}]]$, where $\hat{K''}$ be a diagram of same virtual knot with a smaller number classical crossings, because $K_{us}$-graph of $\hat{K}$ is irreducible and non-isomorphic to the $K_{us}$-graph of $\hat{K''}$ with less number vertices.
\end{proof}

{\bf Remark 4.} We note that Theorem 6 is only a sufficient but not a necessary condition of the minimality. For example, a graph $K_{us}$ of the virtual trefoil $K$ has a bigon (see Figure \ref{ex3}). However a diagram of $K$ is a minimal diagram as has the Jones-Kauffman polynomial $X(K)=-a^{-4}-a^{-6}+a^{-10}$ has $span \langle K\rangle=6$ (see~\cite{MI}).

\begin{figure}[ht]
\centerline{\includegraphics[scale=.5]{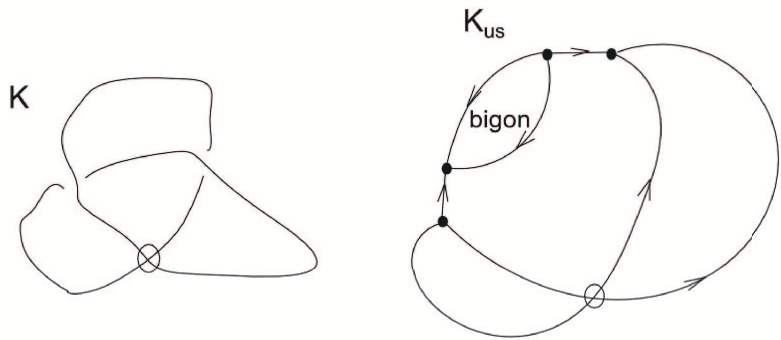}}
\caption{The virtual trefoil $K$ and its graph $K_{us}$}
\label{ex3}
\end{figure}

Recall that by {\em the Gauss diagram} corresponding to a planar diagram of a (virtual) knot we mean a diagram consisting of an oriented circle (with a fixed point, which is not a preimage of a crossing) on which the preimages of the overcrossing and the undercrossing (for each classical crossing) are connected by arrows directed from the preimage of the undercrossing to the
preimage of the overcrossing.

Let a knot and its Gauss diagram be given. We call a chord of the Gauss diagram {\em even} if the number of chords linked with it, is {\em even}, and {\em odd} otherwise (we consider a chord as unlinked with itself). We say that a Gauss diagram is {\em odd}, if all its chords are odd.

As mentioned above, Theorem 5 apply any virtual diagrams unlike result of the work~\cite{ChM} which works only for {\em irreducible odd diagram}.

Finally, we consider an example of the knot in the thickened surface with the virtual diagram which does not apply the result of the work~\cite{ChM}, but we can apply Theorem 5.

{\bf Example 2.} Let $L=J \sqcup K$ be a classical two component link such that $lk(L)=0$, $J$ be a fibred knot with a unique Seifert surface $\Sigma$ of genus 2. Furthermore, let $\hat{K}$ is non-odd virtual diagram (chords 2, 3, 5 and 6 are even) on $\Sigma$ with a irreducible $K_{us}$-graph, and $K$ be a knot in $\mathbb{S}^3\backslash J$ such that it is ''close'' to a knot diagram on $\Sigma$ which is obtained from $\hat{K}$ (see Section 1) (Fig. 10). Then by Theorem 5 $K$ is minimal w.r.t. $J$ and minimum number of double points of the projection on $\Sigma$ is equal to seven.

{\bf Acknowledgement.} The work is supported by Russian Foundation for Basic Research (grants 13-01-00830, 12-01-31507).

\begin{figure}[ht]
\centerline{\includegraphics[scale=.6]{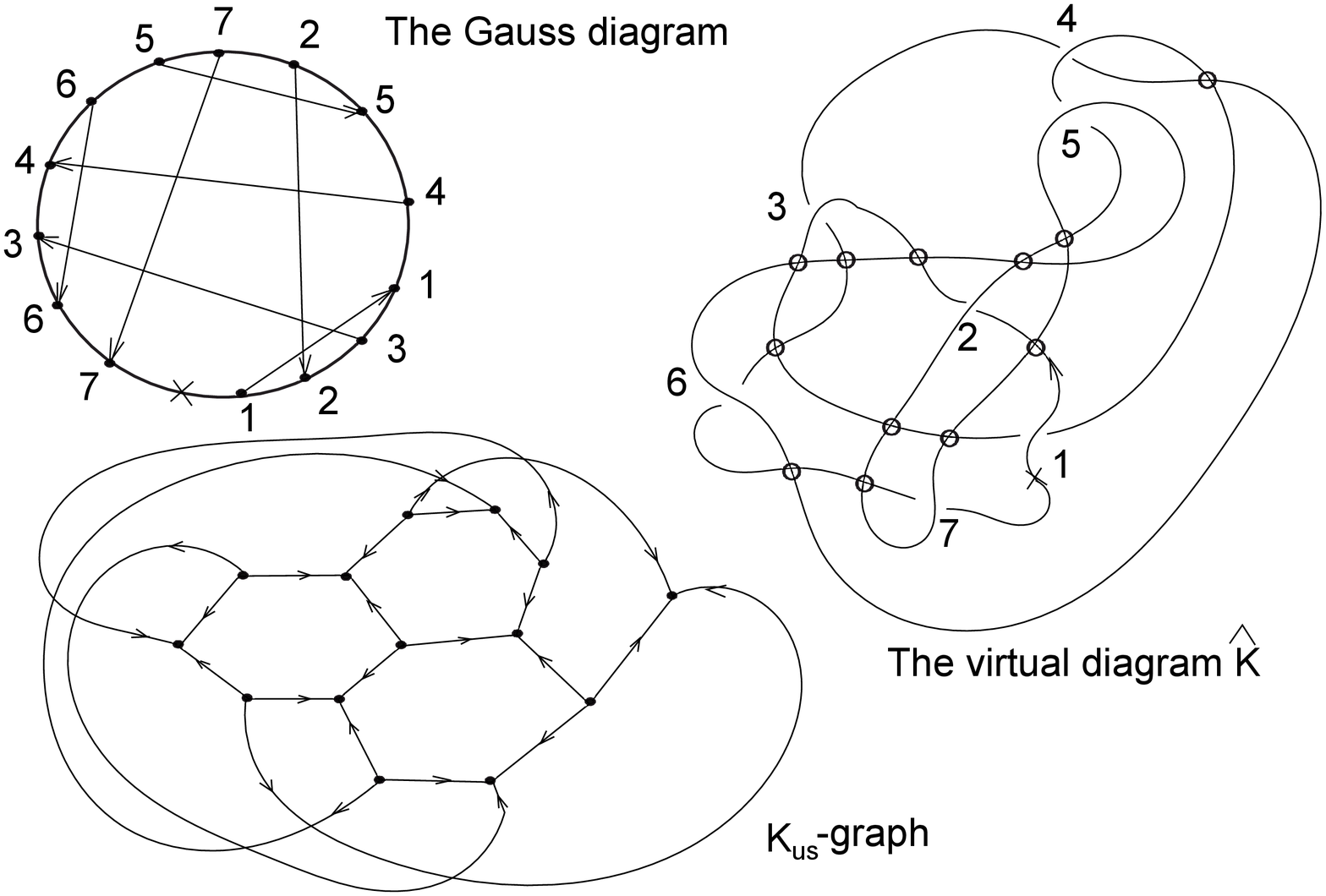}}
\caption{}
\label{exmin}
\end{figure}

\end{document}